\def\ps@IEEEtitlepagestyle{%
    \def\@oddfoot{\mycopyrightnotice}%
    \def\@evenfoot{}%
}
\def\mycopyrightnotice{%
    {\footnotesize  978-1-4799-6773-5/14/\$31.00 \textcopyright2017 Crown\hfill}
    \gdef\mycopyrightnotice{}
}
\newtheorem{ass}{Assumption}
\newtheorem{rmk}{Remark}
\newtheorem{dfn}{Definition}
\newtheorem{prp}{Proposition}
\newtheorem{cor}{Corollary}
\newtheorem{thm}{Theorem}
\newcommand{\reals}{\mathbb{R}}
\newcommand{\sqr}{$\hfill\square$}
\newcommand{\gph}{\mathrm{gph}~}
\newcommand\eps[0]{\varepsilon}
\newcommand*\titleheader[1]{\gdef\@titleheader{#1}}
  \let\st@red@title\@title%
  \def\@title{%
    \bgroup\normalfont\large\centering\@titleheader\par\egroup
    \vskip1.5em\st@red@title}
\title{A Semismooth Predictor Corrector Method for Real-Time Constrained Parametric Optimization with Applications in Model Predictive Control
\thanks{The authors are with the University of Michigan, Ann Arbor. Email:\{dliaomcp,mnicotr,ilya\}@umich.edu.  This research is supported by the National Science Foundation Award Number  CMMI 1562209.}}
\author{Dominic Liao-McPherson, Marco M. Nicotra, Ilya V. Kolmanovsky}
\begin{document}
\maketitle

\begin{abstract}
Real-time optimization problems are ubiquitous in control and estimation, and are typically parameterized by incoming measurement data and/or operator commands. This paper proposes solving parameterized constrained nonlinear programs using a semismooth predictor-corrector (SSPC) method. Nonlinear complementarity functions are used to reformulate the first order necessary conditions of the optimization problem into a parameterized non-smooth root-finding problem. Starting from an approximate solution, a semismooth Euler-Newton algorithm is proposed for tracking the trajectory of the primal-dual solution as the parameter varies over time. Active set changes are naturally handled by the SSPC method, which only requires the solution of linear systems of equations. The paper establishes conditions under which the solution trajectories of the root-finding problem are well behaved and provides sufficient conditions for ensuring boundedness of the tracking error. Numerical case studies featuring the application of the SSPC method to nonlinear model predictive control are reported and demonstrate the advantages of the proposed method.
\end{abstract}

\section{Introduction}
Real-time optimization has the potential to improve the capabilities of many engineered systems. The associated real-time optimization problems can be treated in the framework of parameterized nonlinear programming (PNLP). An important example is the one arising from model predictive control (MPC), where the control action is generated by solving an optimal control problem (OCP) at each sampling instant \cite{rawlings2009model,grune2017nonlinear}. In this context, the OCP typically depends on time-varying parameters such as state measurements and/or operator commands. As a result, the solution of the PNLP needs to be computed as the parameters vary over time, generating a so-called ``solution trajectory''.

In MPC and real-time optimization subsequent problems are typically related. Hence, provided the OCP is appropriately designed, the similarities between OCPs at subsequent sampling instances can be exploited to significantly reduce the computational resources required to implement MPC. In the literature, these methods are often referred to as fast or suboptimal MPC methods, sensitivity methods, and continuation/homotopy methods. We will refer to all of these as ``solution tracking methods''.

Many of the concepts used to develop solution tracking algorithms are based on continuation approaches for smooth nonlinear equations \cite{allgower2012numerical}. An early continuation method specifically for MPC is C/GMRES \cite{ohtsuka2004continuation} which combines a continuation approach with a Krylov solver and leads to an efficient algorithm for unconstrained parameterized OCPs. 

In \cite{zavala2008fast,zavala2009advanced} the sensitivity theory for nonlinear programs was used to develop solution tracking algorithms for fast receding horizon estimation and the advanced step method for MPC. A related algorithm, based on the neighboring extremal theory of optimal control, is the IPA-SQP method\cite{ghaemi2009integrated}. These methods consider inequality constraints but assume that the optimal solution is continuously differentiable with respect to the parameter and, as a result, tend to encounter difficulties in the presence of active set changes.

In \cite{zavala2010real} the authors used the framework of parameterized generalized equations to develop solution tracking algorithms for inequality constrained problems without assuming continuous differentiability of the solution trajectory. The differentiability assumption was replaced by Robinson's strong regularity property \cite{robinson1980strongly}. The solution trajectories of generalized equations under pointwise strong regularity assumptions were studied in \cite{dontchev2013euler} and a sequential convex programming approach was proposed in \cite{dinh2012adjoint}. Sensitivity and predictor corrector methods were developed in \cite{jaschke2014fast} and \cite{kungurtsev2014sqp}, respectively, for tracking solution trajectories of PNLPs when the strong regularity assumption does not hold. Due to the weaker assumptions, both methods require the solution of additional linear programs to compute search directions. A solution tracking method for distributed problems was proposed in \cite{hours2016parametric} and \cite{wolf2016fast} provides a survey of the topic.

In this paper we propose a solution tracking algorithm based on nonsmooth calculus. The necessary conditions for optimality of a parameterized NLP are converted to a system of nonsmooth equations using nonlinear complementarity functions \cite{sun1999ncp}, resulting in a parameterized nonsmooth root-finding problem. We present an algorithm which tracks solution trajectories of this root-finding problem using a semismooth predictor-corrector (SSPC) method. We present sufficient conditions under which the solution trajectories of the root-finding problem are well behaved and establish tracking error estimates for the algorithm. 


The SSPC methods has several advantages compared to existing methods. SSPC makes the same strong regularity assumptions as methods based on generalized equations \cite{zavala2010real,hours2016parametric,dinh2012adjoint,dontchev2013euler}; the subproblems generated by these methods are themselves optimization problems. In contrast, the subproblems generated by the SSPC algorithm are linear systems of equations, similarly to smooth calculus based methods \cite{zavala2009advanced,ghaemi2009integrated}. However, unlike smooth calculus based methods, the nonsmoothness caused by active set changes is naturally handled using generalized derivatives. As a result, SSPC is applicable to the same class of problems as the generalized equation methods but has lower complexity subproblems.

We make extensive use of Clarke's generalized Jacobian \cite{clarke1990optimization}, the notion of semismoothness \cite{qi1993nonsmooth}, and the semismooth Newton's method \cite{qi1993nonsmooth}. Two key papers regarding the application of nonsmooth Newton's methods to optimization problems are \cite{fischer1992special} and \cite{qi1997semismooth}. A survey on the topic of nonsmooth and smoothing Newton's methods can be found in \cite{qi1999survey}.

The contents of this paper are as follows. In Section~\ref{ss:problem_formulation} we discuss the problem setting. In Section~\ref{ss:concepts_from_ns} we review some concepts from nonsmooth analysis used in the paper. In Section~\ref{ss:pred_corr_steps} we reformulate the KKT conditions as a nonsmooth root-finding problem and derive the predictor and corrector steps used in SSPC. In Section~\ref{ss:path-following_algo} we assemble the predictor and corrector steps into a solution tracking algorithm. In Section~\ref{ss:numerical_experiments} we illustrate the utility of SSPC on a numerical example and provide some comparisons with sequential quadratic programming (SQP) methods. Finally, Section~\ref{ss:conclusions} contains some concluding remarks.

\section{Problem formulation and background on parameterized nonlinear programming} \label{ss:problem_formulation}
In this paper we consider parameterized nonlinear programs of the form,
\begin{subequations}  \label{eq:NLP}
\begin{gather}
\underset{z}{\mathrm{min.}} \quad f(z,p),\\
\mathrm{s.t.} \quad g(z,p) = 0, \\
c(z,p) \leq 0,
\end{gather}
\end{subequations}
where $f: \reals^n \times \reals^l \to \reals$, $g:\reals^n \times \reals^l \to \reals^m$, and $c:\reals^n \times \reals^l \to \reals^q$ are $\mathcal{C}^2$ in $z$ and $\mathcal{C}^1$ in $p$. Given a finite sequence of parameter values $\{p_k\}_{k = 0}^{M}$ and a pair $(p_0,x^*_0)$ which approximately minimizes \eqref{eq:NLP}, the objective is to track a solution trajectory of \eqref{eq:NLP}, denoted by $(p_k,x^*_k(p_k))$, as $k\to M$. We make the following assumption regarding the values of the parameter:
\begin{ass} \label{ass:Pbounded}
All $p$ lie in some compact convex set $\mathcal{P} \subset \reals^l$.
\end{ass}
The KKT conditions for \eqref{eq:NLP} are,
\begin{subequations} \label{eq:KKT}
\begin{gather}
\nabla_z L(z,\lambda,v,p) = 0,\\
g(z,p) = 0,\\
c(z,p) \leq 0,~v \geq 0,\label{eq:cmp_start}\\
v^T c(z,p) = 0 \label{eq:cmp_end},
\end{gather}
\end{subequations}
where $L(z,\lambda,v,p) = f(z,p) + g(z,p)^T \lambda + c(z,p)^T v$ is the Lagrangian and $\lambda \in \reals^m$ and $v \in \reals^q$ are the dual variables. Any primal-dual tuple $x = (z,\lambda,v)$ which satisfies \eqref{eq:KKT} is called a KKT point. To ensure that the minimizers of \eqref{eq:NLP} are necessarily KKT points we will apply an appropriate constraint qualification. Recall that the linear independence constraint qualification (LICQ) is said to hold at a point $(\bar{z},\bar{p})$ if
\begin{equation}
  \text{rank}~\begin{bmatrix}
  \nabla_z g(\bar{z},\bar{p})\\
  \nabla_z c(\bar{z},\bar{p})_i
  \end{bmatrix}
   = m + |I_a(\bar{z},\bar{p})|,~ i \in I_a(\bar{z},\bar{p})
\end{equation}
where $I_a(z,p) = \{i\in 1~...~ q~|~ c_i(z,p) =0\}$ denotes the index set of active constraints. Further, if a KKT point $(\bar{x},\bar{p})$ satisfying the LICQ also satisfies
\begin{equation}
  u^T \nabla_z^2 L(\bar{x}) u> 0,~\forall u \in \mathcal{K}_+(\bar{z},\bar{v},\bar{p}) \setminus \{0\},
\end{equation}
where $\mathcal{K}_+(x,v,p) = \{u\in \reals^n~|~ \nabla_z g(\bar{z},\bar{p}) = 0,~ \nabla_zc_i(\bar{z},\bar{p}) u \leq 0, i \in I^+_a(z,v,p), \nabla_z f(\bar{z},\bar{p})^T u \leq 0\}$, and $I_a^+(z,v,p) = I_a(z,p) \cap \{i~|~ v_i > 0\}$ then $\bar{x}$ it is said to satisfy the strong second order sufficient conditions (SSOSC). As detailed in e.g., \cite{nocedal2006numerical}, any KKT point which satisfies the SSOSC and the LICQ is a strict local minimizer of \eqref{eq:NLP}.\\

Since we seek to track minimizers of \eqref{eq:NLP} as the parameter varies, it is desirable to ensure that there exists at least one path\footnote{There could be multiple paths since \eqref{eq:NLP} is not necessarily convex.} and that any existing paths are ``well behaved''. This can be guaranteed by imposing regularity conditions on the problem. To do so we define the solution mapping,
\begin{equation}
	S: p \to S(p) = \{x~|~ \text{\eqref{eq:KKT} is satisfied} \},
\end{equation}
which may be multivalued, and use the concept of strong regularity \cite{robinson1980strongly} in a form which echoes \cite{dontchev2013euler}:
\begin{dfn}
A set valued mapping $F: \reals^N \rightrightarrows \reals^M$ with $(\bar{x},\bar{y}) \in \mathrm{gph}~F$ is said to be strongly regular at $\bar{x}$ for $\bar{y}$ if there exists neighbourhoods $U$ of $\bar{x}$ and $V$ of $\bar{y}$ such that the restricted inverse mapping $\tilde{F}^{-1}: V \to F^{-1}(V)\cap U$ is single valued and Lipschitz continuous on its domain.
\end{dfn}

In the context of parameterized optimization, a strongly regular solution is one where a (local) primal-dual solution of the optimization problem, $x^*$, is locally a Lipschitz continuous function of the parameter, i.e., $x^* = x^*(p)$. The following theorem gives necessary and sufficient conditions for a solution to be strongly regular.
\begin{thm}
A primal-dual solution $x^*$ of \eqref{eq:NLP} is strongly regular if and only if $x^*$ satisfies the LICQ and the SSOSC.
\end{thm}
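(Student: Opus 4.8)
The plan is to recognize this as Robinson's classical characterization of strong regularity for the KKT system of an NLP \cite{robinson1980strongly} and to carry it out in two stages: first reduce strong regularity of $S$ at $p^*$ for $x^*$ to strong regularity of a linearized (affine) generalized equation, and then reduce that to a linear-algebra condition on a saddle-point matrix which is exactly encoded by LICQ $+$ SSOSC.

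First I would rewrite the complementarity block \eqref{eq:cmp_start}--\eqref{eq:cmp_end} as the inclusion $-c(z,p)\in N_{\reals^q_+}(v)$, so that the KKT system \eqref{eq:KKT} becomes the parameterized generalized equation
\[
0\in \Phi(x,p)+N_{\reals^n\times\reals^m\times\reals^q_+}(x),\qquad
\Phi(x,p)=\begin{pmatrix}\nabla_z L(z,\lambda,v,p)\\ g(z,p)\\ -c(z,p)\end{pmatrix},
\]
with $x=(z,\lambda,v)$. Since $f,g,c$ are $\mathcal{C}^2$ in $z$ and $\mathcal{C}^1$ in $p$, the map $\Phi$ is continuously differentiable, and Robinson's theorem on strongly regular generalized equations applies: strong regularity of this inclusion at $(x^*,p^*)$ is equivalent to strong regularity, at $x^*$, of the partial linearization
\[
0\in \Phi(x^*,p^*)+\nabla_x\Phi(x^*,p^*)(x-x^*)+N_{\reals^n\times\reals^m\times\reals^q_+}(x),
\]
which is precisely the KKT system, evaluated at $u=0$, of the quadratic program $\min_u \tfrac12 u^T\nabla_z^2 L(x^*)u+\nabla_z f(z^*,p^*)^T u$ subject to $g(z^*,p^*)+\nabla_z g(z^*,p^*)u=0$ and $c(z^*,p^*)+\nabla_z c(z^*,p^*)u\le 0$.

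Second, I would prove the equivalence at the level of this affine problem. For the ``if'' direction, assume LICQ and SSOSC and split the inequality constraints into inactive, strongly active ($i\in I_a^+$), and weakly active ($c_i(z^*,p^*)=0$, $v_i^*=0$) ones; the inactive ones are locally inert. For any subset $J$ of weakly active indices that one chooses to treat as equalities, the linearized KKT conditions collapse to a square linear system with coefficient matrix $\begin{bmatrix}\nabla_z^2 L(x^*)&B^T\\ B&0\end{bmatrix}$, where the rows of $B$ are $\nabla_z g(z^*,p^*)$ together with $\nabla_z c_i(z^*,p^*)$ for $i\in I_a^+\cup J$. LICQ makes each such $B$ of full row rank; moreover, using the stationarity identity $\nabla_z f(z^*,p^*)=-\nabla_z g(z^*,p^*)^T\lambda^*-\sum_{i\in I_a^+}v_i^*\nabla_z c_i(z^*,p^*)$ one checks that $\ker B\subseteq\mathcal{K}_+(z^*,v^*,p^*)$, so the SSOSC forces $\nabla_z^2 L(x^*)$ to be positive definite on $\ker B$, and hence (by the standard saddle-point nonsingularity lemma, e.g.\ \cite{nocedal2006numerical}) every such matrix is nonsingular. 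From the piecewise-affine structure of the normal-cone map on $\reals^q_+$ and the coherent-orientation characterization of piecewise-linear homeomorphisms, it follows that the solution of the linearized generalized equation depends single-valuedly and Lipschitz-continuously on the canonical perturbation, i.e.\ the problem is strongly regular. For the ``only if'' direction I would argue contrapositively: if LICQ fails, either the multipliers at $x^*$ are non-unique (contradicting single-valuedness of the localization) or a perturbation of the affine constraints destroys Lipschitz dependence; if LICQ holds but SSOSC fails, a nonzero $\bar u\in\mathcal{K}_+(z^*,v^*,p^*)$ with $\bar u^T\nabla_z^2 L(x^*)\bar u\le 0$ can be shown to lie in $\ker B$ for the relevant active set, making the corresponding saddle matrix singular and yielding either a continuum of solutions or non-Lipschitz behaviour under perturbation.

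The hard part will be the bookkeeping around weak activity: when $c_i(z^*,p^*)=0$ and $v_i^*=0$ simultaneously, the generalized equation is genuinely nonsmooth at $x^*$, and one must verify that the ``branch'' systems obtained by activating different subsets of the degenerate indices glue together into a single Lipschitz selection. This is exactly where the strong form of the second-order condition — the cone $\mathcal{K}_+$ built from $I_a^+$ and including the inequality $\nabla_z f^Tu\le 0$, rather than the ordinary SOSC cone — is indispensable, and it is the technical core of Robinson's original argument; for brevity I would cite \cite{robinson1980strongly} for this gluing step rather than reproduce it in full.
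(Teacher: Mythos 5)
The paper does not actually prove this theorem: its ``proof'' is a pointer to \cite[Prop.~1.27--1.28]{izmailov2014newton} and \cite[Thm.~2G.8]{dontchev2009implicit}. What you have written is a reconstruction of the argument behind those references, i.e.\ Robinson's original two-stage reduction \cite{robinson1980strongly}: (i) pass from the nonlinear KKT generalized equation to its partial linearization via Robinson's implicit-function theorem for generalized equations, and (ii) characterize strong regularity of the resulting affine variational inequality through the branch saddle-point systems indexed by subsets of the weakly active constraints. That outline is correct, and you correctly identify both the role of the stationarity identity in showing $\ker B\subseteq\mathcal{K}_+(z^*,v^*,p^*)$ and the fact that nonsingularity of the individual branch matrices is not by itself enough---one needs the coherent-orientation/gluing step for the piecewise-affine normal map, which you reasonably delegate to \cite{robinson1980strongly}. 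So the route is sound and is, in substance, the one the cited references take; you are simply doing the work the paper outsources.

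There is, however, one concrete soft spot in your ``only if'' direction: from a nonzero $\bar u\in\mathcal{K}_+$ with $\bar u^T\nabla_z^2L(x^*)\bar u\le 0$ you cannot conclude that the corresponding saddle matrix is singular. With $B$ of full row rank, $\begin{bmatrix}H&B^T\\ B&0\end{bmatrix}$ is nonsingular if and only if the reduced Hessian $Z^THZ$ is nonsingular ($Z$ a basis of $\ker B$), and an indefinite or even negative-definite reduced Hessian is perfectly compatible with nonsingularity. The necessity of SSOSC therefore cannot be extracted from branch-matrix singularity alone; the standard argument uses that $x^*$ is a local \emph{minimizer} and that strong regularity forces the canonically perturbed problems to retain nearby local minimizers, whose second-order necessary conditions give $u^THu\ge0$ on the critical cone, and this combined with nonsingularity of $Z^THZ$ yields positive definiteness. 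This hypothesis is doing real work: for a KKT point that is not a local minimizer (e.g.\ an unconstrained maximizer of a strictly concave quadratic) strong regularity holds while SSOSC fails, so the equivalence as you have sketched it would be false without invoking minimality. I would repair the converse by making that appeal explicit rather than by trying to exhibit a singular branch matrix.
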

\begin{proof}
See e.g., \cite[Prop 1.27 and 1.28]{izmailov2014newton} or \cite[Theorem 2G.8]{dontchev2009implicit}.
\end{proof}
\begin{cor} \label{corr:lipschitz_param}
For each strongly regular solution $(\bar{p},\bar{x})$ of \eqref{eq:NLP} there exists a neighbourhood $T$ of $\bar{p}$ and a constant $L_p(\bar{p},\bar{x})$ such that $x^*(p)$ is a function satisfying $||x^*(p) - \bar{x}|| \leq L_p ||p - \bar{p}||,~ \forall p \in T$.
\end{cor}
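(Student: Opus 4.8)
The plan is to read the corollary as the \emph{parametric} companion of the preceding theorem: once a primal--dual solution is known to be strongly regular in Robinson's sense, Lipschitz dependence on $p$ follows by composing the strong-regularity localization with the (Lipschitz) dependence of the KKT data on $p$.

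First I would rewrite the KKT system \eqref{eq:KKT} as a parametric generalized equation $0 \in \Phi(x,p) + N_{\mathcal K}(x)$, where $x = (z,\lambda,v)$, $\mathcal K = \reals^n \times \reals^m \times \reals^q_+$, and, with the standard sign convention, $\Phi(x,p) = (\nabla_z L(x,\lambda,v,p),\, -g(z,p),\, -c(z,p))$; the inclusion $v \in \reals^q_+$ together with $-c(z,p) \in N_{\reals^q_+}(v)$ reproduces exactly \eqref{eq:cmp_start}--\eqref{eq:cmp_end}, so $\gph S$ coincides with the solution set of this inclusion. Because $f,g,c$ are $\mathcal C^2$ in $z$ and $\mathcal C^1$ in $p$, the map $\Phi$ is $\mathcal C^1$ jointly in a neighbourhood of $(\bar x,\bar p)$; in particular $p \mapsto \Phi(x,p)$ is locally Lipschitz with a modulus $\kappa$ that is uniform for $x$ near $\bar x$.

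Next I would invoke the theorem: strong regularity of $(\bar p,\bar x)$ means precisely that the canonically perturbed inclusion $\delta \in \Phi(\bar x,\bar p) + \nabla_x \Phi(\bar x,\bar p)(x-\bar x) + N_{\mathcal K}(x)$ has a single-valued, Lipschitz solution map $\delta \mapsto \sigma(\delta)$ on a neighbourhood of $\delta = 0$, with $\sigma(0) = \bar x$ and Lipschitz modulus $\ell$. Robinson's stability theorem for strongly regular generalized equations (the same machinery behind \cite{robinson1980strongly} and \cite[Ch.~2G]{dontchev2009implicit} that underlies the proof of the theorem) then shows that the perturbation produced by moving $p$ off $\bar p$ is absorbed into this canonical perturbation, so the solution mapping $p \mapsto S(p)$ admits a single-valued localization $x^*(\cdot)$ around $\bar p$ for $\bar x$ that is Lipschitz on some neighbourhood $T$ of $\bar p$, with modulus $L_p$ essentially bounded by $\ell\,\kappa$. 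Since $x^*(\bar p) = \bar x$, this gives $\|x^*(p) - \bar x\| = \|x^*(p) - x^*(\bar p)\| \le L_p \|p - \bar p\|$ for all $p \in T$, which is the claim.

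The only genuine work is the middle step: converting Robinson's strong regularity, which is stated for the abstract canonical perturbation $\delta$, into Lipschitz continuity with respect to the structured parameter $p$. This is exactly where the $\mathcal C^1$-in-$p$ regularity of $f,g,c$ enters, and it is supplied by the classical parametric-stability theorem for generalized equations rather than by any new estimate; the remaining bookkeeping (shrinking neighbourhoods, bounding $L_p$ in terms of $\ell$ and $\kappa$) is routine.
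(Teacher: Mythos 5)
Your argument is correct and is essentially the paper's proof spelled out: the paper simply cites \cite[Theorem 2B.1]{dontchev2009implicit}, and your generalized-equation reformulation plus the conversion of canonical perturbations into $p$-perturbations via the $\mathcal{C}^1$-in-$p$ data is exactly the content of that theorem. One cosmetic slip: with your choice $\Phi = (\nabla_z L, -g, -c)$ the third row of the inclusion reads $c(z,p) \in N_{\reals^q_+}(v)$, not $-c(z,p) \in N_{\reals^q_+}(v)$ (the latter would force $c \geq 0$); this does not affect the argument.
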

\begin{proof}
See e.g., \cite[Theorem 2B.1]{dontchev2009implicit}.
\end{proof}
Our primary regularity assumption is stated below; it excludes phenomena like bifurcations or local minima becoming stationary points as the parameter varies.
\begin{ass} \label{ass:str_reg}
(Pointwise strong regularity) The LICQ and SSOSC hold at all KKT points in $\mathcal{P}$.
\end{ass}

Corollary~\ref{corr:lipschitz_param} is used to establish boundedness of the solution tracking error (Theorem~\ref{thrm:tracking_error}). For general constrained optimization problems, we cannot expect solution trajectories to satisfy stronger smoothness properties than local Lipschitz continuity. Indeed, constraint activation/deactivation typically destroys differentiability and non-convex problems may have multiple local minima. Strong regularity also imparts desirable properties to the solution mapping, in particular it establishes that $S(p)$ is comprised of finitely many isolated Lipshitz continuous trajectories \cite[Theorem 3.2]{dontchev2013euler}. 

\begin{rmk}
The LICQ and SSOSC are standard assumptions in the convergence theory of sequential quadratic programming type (SQP) algorithms \cite{nocedal2006numerical}, though convergence can be established under weaker conditions \cite{bonnans1994local}. Similarly, the pointwise strong regularity condition is a common assumption in literature on time varying optimization, e.g., \cite{zavala2010real,hours2016parametric,dinh2012adjoint,dontchev2013euler}. Lipschitz continuity of the primal variable and objective function can be established under weaker conditions \cite{kojima1980strongly} which is exploited in e.g., \cite{jaschke2014fast,kungurtsev2014sqp}. However, the resulting degeneracy of the dual variables complicates the algorithms, requiring the solution of quadratic and linear programs to determine search directions. In contrast, the use of generalized derivatives allows for methods that require only the solution of linear systems of equations.
\end{rmk}

\section{Some concepts from nonsmooth analysis} \label{ss:concepts_from_ns}
In this section we briefly review some concepts from non-smooth analysis that will be used later in the paper. Consider a function $G:\reals^n \to \reals^m$ which is locally Lipschitz continuous on an open set $U\subset \reals^n$. Rademacher's theorem \cite{rademacher1919partielle}
states that $D$, the set of points where $G$ is differentiable, is dense. Clarke's generalized Jacobian \cite{clarke1990optimization} is defined as follows
\begin{multline}
  \partial G(x) = \text{convh}~\{J\in \reals^{m\times n}|~ \exists \{x^k\} \subset D : \\ \{x^k\} \rightarrow x,~ \{\nabla G(x_k)\} \rightarrow J\},
\end{multline} 
where $\text{convh}~A$ denotes the convex hull of $A$. Note that the generalized Jacobian is a set of matrices, $\nabla G(x)\in\partial G(x)$, wherever $G$ is differentiable and, whenever $G$ is continuously differentiable, it reduces to $\partial G(x) = \{\nabla G(x)\}$. A key notion in the analysis of nonsmooth Newton's methods is semismoothness \cite{qi1993nonsmooth}. The function $G$ is said to be semismooth at $x$ if $G$ is Lipschitz in a neighbourhood of $x$, directionally differentiable in every direction and satisfies the following\footnote{We refer readers unfamiliar with big and little O notation to \cite[A.2]{nocedal2006numerical}, or \cite[A.2]{izmailov2014newton}.},
\begin{equation}
  \underset{J \in \partial G(x+\xi)}{\text{sup}} ||G(x+\xi) - G(x) - J\xi|| = o(||\xi||),
\end{equation}
if the right hand side is replaced by $O(||\xi||^2)$ then $G$ is said to be strongly semismooth at $x$.

\section{The predictor and corrector steps} \label{ss:pred_corr_steps}
The SSPC algorithm is based on mapping the KKT necessary conditions to a nonsmooth system of equations using what is known as a nonlinear complementarity (NCP) function \cite{sun1999ncp}. An NCP function $\psi : \reals^2 \to \reals$ has the property that
\begin{equation}
	\psi(a,b) = 0 \Leftrightarrow a \geq 0, b\geq 0, ab = 0,
\end{equation}
which can be used to convert complementarity systems into equations. A common example of an NCP function is the minimum function $\psi(a,b) = \mathrm{min}(a,b)$ implemented in \cite{qi1997semismooth}. 

Following \cite{qi1997semismooth}, we use an NCP function to map the KKT conditions \eqref{eq:KKT} to a system of nonsmooth equations. We define the mapping
\begin{equation} \label{eq:Fmapping}
	F(x,p) = \begin{bmatrix}
	\nabla_z L(z,\lambda,v,p)\\
	g(z,p)\\
	\psi(-c_1(z,p),v_1)\\
	\vdots\\
	\psi(-c_q(z,p),v_q)
	\end{bmatrix} = \begin{bmatrix}
		\nabla_z L(z,\lambda,v,p)\\
	g(z,p)\\
	\phi(-c(z,p),v)
	\end{bmatrix},
\end{equation}
where $x = (z,\lambda,v)$ is the primal-dual tuple and $\phi$ collects the last $q$ components of $F$. Due to the properties of the NCP function, the roots of $F$ coincide with the KKT points of \eqref{eq:NLP}. Solution trajectories $x(p) \in S(p)$ of \eqref{eq:NLP} can thus be constructed by tracking solutions of $F(x,p) = 0$ as the parameter varies. This mapping is semismooth \cite{qi1997semismooth}, thus this can be accomplished using a semismooth Euler-Newton predictor-corrector algorithm. Since $F$ is semismooth we can approximate it to first order in a neighbourhood of any $(\bar{x},\bar{p})$ as,
\begin{equation}
F(x,p) \approx F(\bar{x},\bar{p}) + V(p-\bar{p}) + B(x-\bar{x}),
\end{equation}
where $V\in \partial_p F(\bar{x},\bar{p})$ and $B\in \partial_xF(\bar{x},\bar{p})$ in a process analogous to Taylor expansion. From this approximation we can derive Euler predictor and Newton corrector steps by setting the approximation to zero. The resulting steps are,
\begin{subequations} \label{eq:pc}
\begin{align}
\text{Predictor:}& \quad F_{k-1} + V_{k-1} \Delta p_k + \hat{B}_{k-1} (x^-_k - x_{k-1}) = 0, \label{eq:predictor}\\
\text{Corrector:}& \quad F_{k}^- + \hat{E}_{k} (x_k - x_k^-) = 0, \label{eq:corrector}
\end{align}
\end{subequations}
the predictor solves \eqref{eq:predictor} for $x_k^-$ and the corrector solves \eqref{eq:corrector} for $x_k$. The matrices used in \eqref{eq:pc} are defined as follows: $V_{k-1} \in \partial_pF(x_{k-1},p_{k-1})$, $\hat{E}_k \in \partial_xF(x^-_k,p_k) + \Sigma_{k}$, $B_{k-1} \in \partial_xF(x_{k-1},p_{k-1}) + \Sigma_{k-1}$, $\Delta p_k = p_k - p_{k-1}$, $F_k^- = F(x_k^-,p_k)$, and $F_{k-1} = F(x_{k-1},p_{k-1})$.
These expressions include errors terms, $\Sigma_k, \Sigma_k{k-1}$, which represent e.g., regularization.

The generalized Jacobian of \eqref{eq:Fmapping} is given by all matrices of the form \cite[Prop 3.26]{izmailov2014newton}:
\begin{equation} \label{eq:dF_x}
	\partial_x F(x,p) = \begin{bmatrix}
		\nabla_z^2 L(x,p) & \nabla_z g(z,p)^T & \nabla_z c(z,p)^T\\
		\nabla_z g(z,p) & 0 & 0\\
		-C \nabla_z c(z,p) & 0 & D
	\end{bmatrix},
\end{equation}
where $C = \text{diag}(\gamma)$ is a diagonal matrix with elements satisfying
\begin{equation}
	\gamma_i \in \begin{cases}
	[0,1], & \text{if } v_i = -c_i(z,p),\\
	\{1\}, & \text{if } v_i > -c_i(z,p),\\
	\{0\}, & \text{if } v_i < -c_i(z,p),
	\end{cases}
\end{equation}
and $D = diag(1 - \gamma)$. All elements of $\partial_x F$ are guaranteed to be non-singular in the vicinity of a strongly regular solution (Proposition~\ref{prp:properties}). The Jacobian $\partial_p F$ consists of all matrices of the form
\begin{equation} \label{eq:dF_p}
	\partial_p F(x,p) = \begin{bmatrix}
		\nabla_{pz}L(x,p)\\
		\nabla_{p}g(z,p)\\
		-C \nabla_{p} c(z,p)
	\end{bmatrix},
\end{equation}
where $C$ is the same matrix as in \eqref{eq:dF_x}.

We add regularization to the algorithm in order to improve numerical conditioning by using $\hat{D} = D + \delta I$, for some $\delta \geq 0$, in place of $D$ in \eqref{eq:dF_x}. The regularization terms are extremely important in practice because elements of $\partial_x F$ can easily become ill-conditioned, causing the SSPC algorithm to diverge. We have observed that even a small amount of regularization, e.g., $\delta \approx 10^{-6}$ to $10^{-12}$, reliably handles this issue; likely because, near strongly regular solutions, all elements of $\partial_x F$ are guaranteed to be invertible in exact arithmetic.

Thanks to regularity assumptions made in Section~\ref{ss:problem_formulation}, it is possible to establish error bounds for the predictor and corrector steps which are summarized in the following theorem:
\begin{thm} \label{thrm:tracking_error}
Suppose that $x_{k-1}$ lies within a neighbourhood $\bar{X}_{k-1}$ of $x^*_{k-1} \in S(p_{k-1})$. Define the errors $e_k = x_k - x_k^*$ and $e_k^- = x^-_k - x^*_k$. Then there exists a neighbourhood $\bar{T}_{k-1}$ of $p_{k-1}$, $Z_k$ of $x_k^*$ and positive constants $\alpha,\beta,\sigma,$ and $\eta$ such that
\begin{subequations} \label{eq:error_bounds}
\begin{gather}
	||e_k^-|| \leq \alpha ||e_{k-1}||^2 + \beta ||e_{k-1}|| ||\Delta p_k|| + \sigma||\Delta p_k||^2,\\
	||e_k|| \leq \eta ||e_k^-||^2,
\end{gather}
\end{subequations}
provided $p_k \in \bar{T}_{k-1}$, $\bar{X}_{k-1}$ is sufficiently small, and $x_k^- \in Z_k$.
\end{thm}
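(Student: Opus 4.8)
The plan is to prove the two estimates in \eqref{eq:error_bounds} separately: the second by treating the corrector \eqref{eq:corrector} as one step of a semismooth Newton iteration for $F(\cdot,p_k)=0$, and the first by treating the predictor \eqref{eq:predictor} as an inexact semismooth Newton step for the same equation started from $x_{k-1}$. Before either estimate I would fix the neighbourhoods and constants. By Assumption~\ref{ass:str_reg} and Proposition~\ref{prp:properties}, there are neighbourhoods of $(x^*_{k-1},p_{k-1})$ and of $(x^*_k,p_k)$ on which every element of $\partial_x F$ (and of $\partial_x F+\Sigma$ for the small regularization used) is nonsingular with inverse norm at most some $c_0$; on the same sets the $\mathcal{C}^2$/$\mathcal{C}^1$ data give a bound $c_1$ on $||V||$ for $V\in\partial_p F$ and a uniform (strong) semismoothness modulus for $F$. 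Corollary~\ref{corr:lipschitz_param} then gives $||x^*_k-x^*_{k-1}||\le L_p||\Delta p_k||$ once $\bar T_{k-1}$ is taken inside the corresponding neighbourhood of $p_{k-1}$. The hypotheses that $\bar X_{k-1}$ be sufficiently small, $p_k\in\bar T_{k-1}$, and $x^-_k\in Z_k$ are exactly what keeps $x_{k-1}$, $x^-_k$ and $x_k$ inside these neighbourhoods.

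For the corrector, rewrite \eqref{eq:corrector} as $x_k=x^-_k-\hat E_k^{-1}F^-_k$, subtract $x^*_k$, and use $F(x^*_k,p_k)=0$ to obtain $\hat E_k(x_k-x^*_k)=-\big(F(x^-_k,p_k)-F(x^*_k,p_k)-\hat E_k(x^-_k-x^*_k)\big)$. Since $\hat E_k\in\partial_x F(x^-_k,p_k)$ up to the small $\Sigma_k$ and $x^-_k\in Z_k$, strong semismoothness of $F(\cdot,p_k)$ at $x^*_k$ bounds the bracket by a constant times $||e^-_k||^2$; premultiplying by $\hat E_k^{-1}$ and using $||\hat E_k^{-1}||\le c_0$ yields $||e_k||\le\eta||e^-_k||^2$.

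For the predictor, I would first absorb the parameter increment: semismoothness of $F$ in $p$ gives $F(x_{k-1},p_k)=F_{k-1}+V_{k-1}\Delta p_k+r$ with $||r||=O(||\Delta p_k||^2)$, so \eqref{eq:predictor} reads $F(x_{k-1},p_k)+\hat B_{k-1}(x^-_k-x_{k-1})=r$. Subtracting $F(x^*_k,p_k)=0$ and solving for $e^-_k$ gives $\hat B_{k-1}e^-_k=-\big(F(x_{k-1},p_k)-F(x^*_k,p_k)-\hat B_{k-1}(x_{k-1}-x^*_k)\big)+r$. The bracket is controlled by strong semismoothness of $F(\cdot,p_k)$ at $x^*_k$, once one checks via the structure \eqref{eq:dF_x} that $\hat B_{k-1}$ differs from an element of $\partial_x F(x_{k-1},p_k)$ by an $O(||\Delta p_k||)$ matrix; this makes the bracket $O(||x_{k-1}-x^*_k||^2)+O(||\Delta p_k||\,||x_{k-1}-x^*_k||)$. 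Using $||x_{k-1}-x^*_k||\le||e_{k-1}||+L_p||\Delta p_k||$, expanding, adding $||r||$, and premultiplying by $\hat B_{k-1}^{-1}$ (norm $\le c_0$) gives $||e^-_k||\le\alpha||e_{k-1}||^2+\beta||e_{k-1}||\,||\Delta p_k||+\sigma||\Delta p_k||^2$ for suitable constants.

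The main obstacle is the careful handling of the set-valuedness of $\partial_x F$ and $\partial_p F$. Strict complementarity is not assumed, so these are genuinely sets of matrices in every row corresponding to a weakly active constraint, and one must verify that the particular elements $\hat B_{k-1}$, $V_{k-1}$, $\hat E_k$ chosen by the algorithm are consistent --- within the second-order error budget --- with the elements implicitly produced by the semismoothness expansions and by the (directional) sensitivity of $x^*(\cdot)$ when an active-set change occurs between $p_{k-1}$ and $p_k$. What makes this go through is that in exactly those ambiguous rows the corresponding component of $F$ is itself $O(||e_{k-1}||)$ or $O(||\Delta p_k||)$, since both $v_i$ and $-c_i$ are near zero there, so the Jacobian ambiguity only affects the estimates at second order; together with the uniform semismoothness modulus on the compact neighbourhoods fixed at the outset, this closes the argument.
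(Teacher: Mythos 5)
Your corrector argument is the same as the paper's and is sound once one missing ingredient is added: with a generic ``small'' regularization $\Sigma_k$, the term $\hat E_k^{-1}\Sigma_k e_k^-$ is only \emph{first} order in $\|e_k^-\|$, so the quadratic bound $\|e_k\|\le\eta\|e_k^-\|^2$ actually requires $\|\Sigma_k\|=O(\|e_k^-\|)$. The paper obtains this from Step~\ref{step:reg_update} of Algorithm~\ref{algo:SSPC}, which enforces $\delta\le\|F(x_k^-,p_k)\|\le L_F\|e_k^-\|$ by Lipschitz continuity of $F$; you need to invoke the same bound (and, analogously, $\|\Sigma_{k-1}\|\le cL_F\|e_{k-1}\|$ in the predictor, where it is responsible for the cross term $\beta\|e_{k-1}\|\,\|\Delta p_k\|$).

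The predictor is where your route genuinely diverges from the paper's, and where it breaks. The paper performs a \emph{single} strong-semismoothness expansion of $F$ jointly in $(x,p)$ about $(x_{k-1},p_{k-1})$, using a matched pair $(B_{k-1},V_{k-1})$ taken from the joint generalized Jacobian (the same matrix $C$ appears in \eqref{eq:dF_x} and \eqref{eq:dF_p}), evaluates the expansion at $(x_k^*,p_k)$ where $F$ vanishes, and then applies Corollary~\ref{corr:lipschitz_param}; no comparison between generalized Jacobians at different parameter values is ever required. Your two-stage expansion (first in $p$, then in $x$ at the new parameter $p_k$) forces you to relate $\hat B_{k-1}\in\partial_xF(x_{k-1},p_{k-1})+\Sigma_{k-1}$ to an element of $\partial_xF(x_{k-1},p_k)$, and the repair you sketch for the branch-switching rows does not close the second-order budget. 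In a row where the complementarity branch switches, the two admissible Jacobian rows differ by $\Delta\gamma_i$ times the fixed row $\bigl(-\nabla_zc_i,\,0,\,-1\bigr)$ with $|\Delta\gamma_i|$ of order one; applying this difference to $x_{k-1}-x_k^*$ gives $\Delta\gamma_i\bigl(-\nabla_zc_i(z_{k-1}-z_k^*)-(v_{k-1,i}-v_{k,i}^*)\bigr)$, and even granting that all of $c_i$, $v_i$ are small in such rows this quantity is only $O(\|e_{k-1}\|+\|\Delta p_k\|)$ --- first order, not second. Concretely, a constraint that is weakly active at $p_{k-1}$ and becomes inactive at $p_k$, with $x_{k-1}=x_{k-1}^*$ and $\gamma_i=1$ selected at the kink, produces a mismatch of size $\Theta(\|\Delta p_k\|)$ rather than $\Theta(\|\Delta p_k\|^2)$, so your argument as written only yields $\|e_k^-\|=O(\|e_{k-1}\|+\|\Delta p_k\|)$. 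To recover the stated bound you should follow the paper and expand jointly in $(x,p)$, so that the only residual is the one controlled by semismoothness with the generalized Jacobian evaluated exactly at the point $(x_{k-1},p_{k-1})$ where the algorithm computes it.
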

\begin{proof}
See appendix.
\end{proof}

Theorem~\ref{thrm:tracking_error} demonstrates the existence of a region within which the tracking error is guaranteed to remain bounded. It generalizes the quadratic convergence estimates of the classical Newton's method to the setting of parameter dependent semismooth problems. These contraction estimates provide a theoretical  foundation for the SSPC algorithm. However, as is typical with Newton's method, the results are local and the proof provides no insight into how to estimate the sizes of the various neighbourhoods; these concerns are usually handled by adding safeguards. In the context of SSPC safeguarding the method requires limiting $\Delta p_k$, which may be difficult in many real-time applications. In the next section, we suggest a constructive method for overcoming this issue by interpolating between $p_{k-1}$ and $p_k$, similarly to \cite{jaschke2014fast}, and taking multiple steps along the resulting path.

\section{A path-following algorithm for real-time optimization} \label{ss:path-following_algo}
In this section we present a solution tracking algorithm for quickly computing solutions of PNLPs in real-time. We assume that a measurement $p_k$ becomes available at each sampling instance $k$, and that an approximate solution $x_{k-1}$ of the PNLP for the parameter value $p_{k-1}$ was computed at the previous timestep. The objective is then to compute $x_{k}$ satisfying $||F(x_k,p_k)||\leq \eps$ as quickly as possible.

Since the parameter change $\Delta p_k = p_k - p_{k-1}$ may be too large to ensure that the tracking bounds of Theorem~\ref{thrm:tracking_error} hold, we propose to generate a path connecting $p_k$ and $p_{k-1}$ and take smaller steps along the path. Similarly to \cite{jaschke2014fast} we construct a path depending on a homotopy parameter $t \in [0,1]$ as $P(t) = p_{k-1} + t\Delta p_k$. We assume that a constant $\kappa$ is known such that if $||\Delta p|| \leq \kappa$ then the conditions of Theorem~\ref{thrm:tracking_error} can be satisfied if $\eps$ is chosen correctly. The tolerance $\kappa$ can be thought of as the maximum allowable parameter variation. The SSPC algorithm then traverses the path between $p_{k-1}$ and $p_k$, alternating between a predictor step and corrector loop, using a uniform stepsize $h$ such that the inequality $||\Delta p_k||h\leq \kappa$ is satisfied. The SSPC algorithm is summarized in Algorithm~\ref{algo:SSPC}. 

\begin{algorithm}[H]
\caption{SSPC: Semismooth Predictor-Corrector}
\begin{algorithmic}[1] \label{algo:SSPC}
 \renewcommand{\algorithmicrequire}{\textbf{Input:}}
 \renewcommand{\algorithmicensure}{\textbf{Output:}}
 \REQUIRE $\delta_0$, $\eps$, $p_k$, $p_{k-1}$, $x_{k-1}$, $\kappa$
 \ENSURE  $x_k$
  \STATE $\Delta p_k = p_k - p_{k-1}$, $M \gets \texttt{max(1,ceil}(||\Delta p_k||/\kappa))$
  \STATE $h \gets 1/M$, $x \gets x_k$,  $\delta \gets \delta_0$
  \FOR{$i = 1~...~M$}
  \STATE $p^+ \gets P(t+h),~ p \gets P(t)$
  \STATE $\delta \gets \text{min}(\delta,||F(x,p)||)$ \label{step:reg_update}
  \STATE Compute $\hat{B} \in \partial_xF(x,p) + \Sigma(\delta)$, $V\in \partial_pF(x,p)$
  \STATE $x\gets x -\hat{B}^{-1}[h V\Delta p_k + F(x,p)]$,
  \WHILE{$||F(x,p^+)|| > \eps$}
    \STATE $\delta \gets \text{min}(\delta,||F(x,p^+)||)$
  	\STATE Compute $E \in \partial_x F(x,p^+) + \Sigma(\delta)$
    \STATE $x\gets x -E^{-1}F(x,p^+)$
  \ENDWHILE
  \STATE $t \gets t+h$
  \ENDFOR
  \RETURN $x$
 \end{algorithmic}
 \end{algorithm}

Note that the convergence properties of Algorithm~\ref{algo:SSPC} are identical to those of a semismooth Newton's method \cite{qi1993nonsmooth} and follow directly from the pointwise strong regularity assumption, the isolation of solution trajectories\cite[Theorem 3.2]{dontchev2013euler}, the convexity of $\mathcal{P}$, and Theorem~\ref{thrm:tracking_error}.

\begin{rmk}
The uniform grid algorithm serves to illustrate the concepts and performs well in our numerical studies but requires that $\kappa$ be treated as a tuning parameter. We have observed that SSPC is quite robust to the choice of $\kappa$. However, an algorithm with an adaptive step size, e.g., along the lines of \cite[Chapter 6]{allgower2012numerical}, is expected to be more robust and/or efficient than Algorithm~\ref{algo:SSPC} and is a topic of future work.
\end{rmk}

\section{Numerical Experiments} \label{ss:numerical_experiments}
\subsection{Spacecraft Attitude Control}
In this section we illustrate the performance of SSPC using a numerical example where we control the orientation of a rigid satellite using nonlinear MPC (NMPC). The attitude dynamics of a rigid spacecraft are given by the Euler equations,
\begin{equation}
	J\dot{\omega} + \omega^\times J \omega = u,
\end{equation}
where $\omega\in \reals^3$ is the vector of angular velocities expressed in a body fixed frame, $J = diag(918,920,1365)$, is the inertia matrix and $u \in \reals^3$ are external control moments \cite{de2012spacecraft}.
We choose a 3-2-1 Euler angle sequence as the orientation representation, the kinematic equations are:
\begin{equation}
	\dot{\theta} = S(\theta) w,~ S = \begin{bmatrix}
		1 & \sin(\theta_1) \tan(\theta_2) & \cos(\theta_1)\tan(\theta_2) \\
		0 & \cos(\theta_1) & -\sin(\theta_1)\\
		0 & \sin(\theta_1) \sec(\theta_2) & \cos(\theta_1) \sec(\theta_2)
	\end{bmatrix}.
\end{equation}
The equations of motion can then be written as
\begin{equation}
	\dot{\xi} = f_c(\xi,u) = \begin{bmatrix}
		J^{-1}(-\omega^\times J \omega + u)\\
		S(\theta) \omega
	\end{bmatrix},
\end{equation}
where $\xi = [\omega^T~\theta^T]^T$ is the state vector. We descritize the equations of motion using explicit Euler integration, i.e., $\xi_{k+1} = f_d(\xi_k,u_k) = \xi_k + \tau f_c(\xi_k,u_k)$, where $\tau = 3 [s]$ is the sampling period. The objective is to stabilize the satellite in a target orientation given by $r$, the reference vector. We consider the following optimal control problem,
\begin{subequations}
\begin{gather}
\underset{\xi,u,s}{min.}  \quad J(\xi,u,s) = ||\xi_N-r||_P^2  + \sum_{i=0}^{N-1} \ell(\xi_i,u_i,s_{i+1})\\
s.t \quad \xi_{i+1} = f_d(\xi_i,u_i), ~ i = 0, ~... ~,N-1,\\
c_{\xi}(\xi_i) \leq s_i, ~ i = 1, ~ ... ~ N,\\
c_{u}(\xi_i) \leq 0, ~ i = 0,~ ... ~ N-1,\\
-s_i \leq 0, ~ i = 1, ~ ... ~ N,
\end{gather}
\end{subequations}
where $N$ is the prediction horizon, $c_{\xi}(\xi) = [\xi^T-\xi^T_{ub} ~~\xi^T_{lb} - \xi^T]^T$, and $c_{u}(u) = [u^T-u^T_{ub} ~~u^T_{lb} - u^T]^T$, are the constraints and $\ell(\xi,u) = ||\xi-r||^2_Q + ||u||^2_R + \gamma s$ is the cost function\footnote{$Q = 10\text{diag}([10,10,10,1,1,1])$, $R = \text{diag}([0.1,0.1,0.1])$, $\gamma = 10$.}. The terminal penalty matrix, $P$, is chosen as the solution of the discrete time algebraic Riccati equation with the dynamics linearized about the origin. Linearly penalized slacks have been incorporated into the OCP to ensure feasibility. We consider two slew maneuvers, one with and one without active state constraints. For both cases the reference trajectory is given by
\begin{equation}
	r(t) = \begin{cases}
	[0 ~ 0 ~ 0 ~ 15^\circ ~ 30^\circ ~ -20^\circ]^T, & t < 120~s,\\
	[0~ 0 ~ 0 ~ 0 ~ 0 ~ 0]^T, & t > 120~s,
	\end{cases}
\end{equation}
and $\xi(0) = 0$. The constraints imposed during both cases are summarized in Table~\ref{tab:cstr}.  


\begin{table}[h]
\centering
\caption{The constraints used in the skew maneuver simulations.}
\label{tab:cstr}
\begin{tabular}{|c|c|c|}
\hline
 & Case 1 & Case 2\\ \hline
$\xi_{ub}$ & $360[1~1~1~1~1~1]^T$ & $[1.15~1.15~1.15~30~30~0]^T$\\\hline
$\xi_{lb}$ & $-360[1~1~1~1~1~1]^T$ & $-[1.15~1.15~1.15~0~0~20]^T$\\\hline
$u_{ub}$ & $[2~2~2]^T$ & $[2~2~2]^T$\\\hline
$u_{lb}$ & $-[2~2~2]^T$ & $-[2~2~2]^T$\\\hline
\end{tabular}
\end{table}

Closed-loop simulation results of Cases 1 and 2 with $N = 15$ can be found in Figures~\ref{fig:case1} and \ref{fig:case2}, respectively. A zoomed in view illustrating state constraint activation can be found in Figure~\ref{fig:cstr_cmp}. The NMPC controller successfully drives the spacecraft orientation to the desired setpoints while enforcing state and control constraints. Note that Case 2 was designed to be challenging numerically due to the presence of (i) multiple state constraint (de)activations, and (ii) infeasibility at certain steps (which is handled by the slack variables, but still causes ill-conditioning).

\begin{figure}[htbp]
	\centering
	\includegraphics[width=0.48\textwidth]{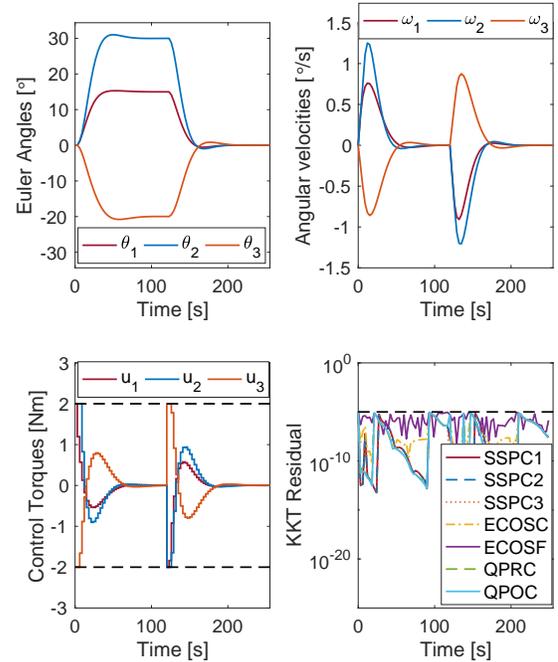}
	\caption{Case 1: The response of the closed-loop system to the slew-maneuver command.}
	\label{fig:case1}
\end{figure}

\begin{figure}[htbp]
	\centering
	\includegraphics[width=0.48\textwidth]{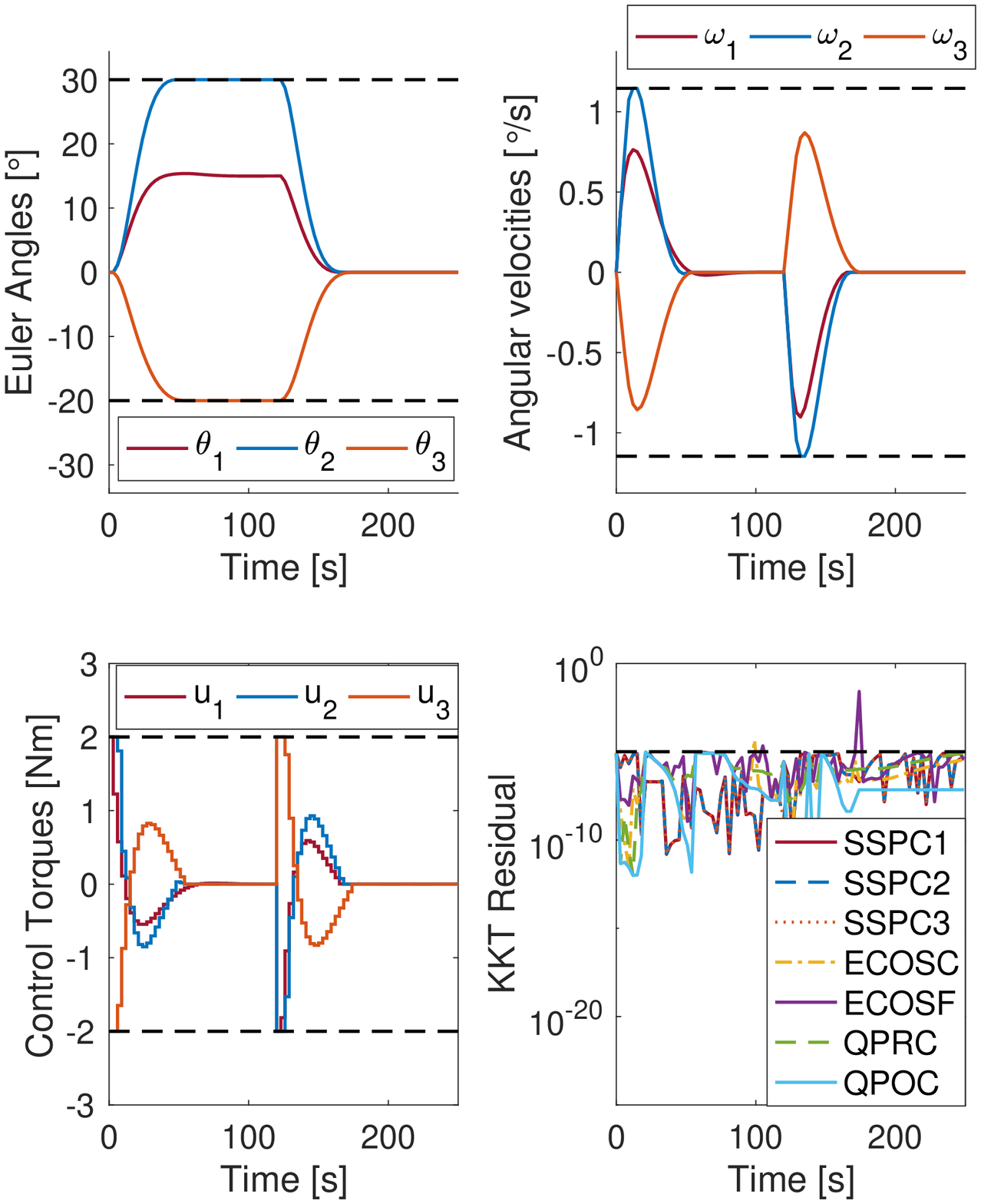}
	\caption{Case 2: The response of the closed-loop system to the slew-maneuver command.}
	\label{fig:case2}
\end{figure}

\begin{figure}[htbp]
	\centering
	\includegraphics[width=0.48\textwidth]{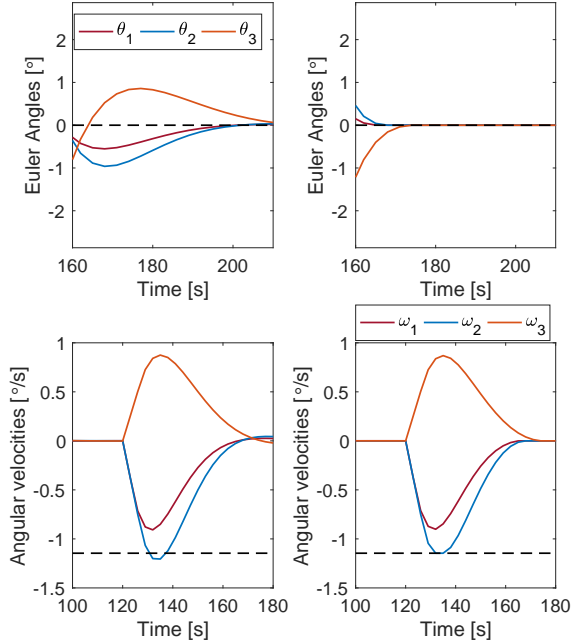}
	\caption{A comparison between the closed-loop responses of case 1 (left) and case 2 (right) illustrating, orientation constraint activation (top) and angular velocity constraint activation (bottom).}
	\label{fig:cstr_cmp}
\end{figure}

\subsection{Comparisons with Sequential Quadratic Programming}
To illustrate the utility of SSPC we perform numerical comparisons against the sequential quadratic programming (SQP) method. When warmstarted, the SQP method is equivalent to applying the Josephy-Newton method to the generalized equation reformulation of \eqref{eq:KKT} \cite{kungurtsev2014sqp,izmailov2014newton} and is thus a reasonable initial benchmark\footnote{Various state of the art QP solvers are also readily available ensuring that SSPC is benchmarked against efficient implementations.}. All necessary derivatives were computed automatically using CASADI \cite{Andersson2013b}. The SSPC method was implemented in native MATLAB code. The linear systems for the predictor and corrector were condensed, first by eliminating the $\xi$ variables using the equality constraints see e.g., \cite{andersson2013condensing}, then by using the Schur complement method with the matrix $D$ in \eqref{eq:dF_x} used as the pivot. The resulting condensed linear systems are sometimes referred to as the normal equations form of the originals, see e.g., \cite[Section 14.2]{nocedal2006numerical}. 

We implemented a standard SQP algorithm, \cite[Algorithm 4.13]{izmailov2014newton} using the augmented Lagrangian Hessian matrix $H_i = \nabla_z^2L(x_i,p_k) + \rho \nabla_zg(z_i,p_k)^T \nabla_z g(z_i,p_k)$ to maintain convexity of the QPs. A fixed penalty parameter $\rho = 1000$ was used throughout. We use three different state of the art QP solvers: i) ECOS, an interior point based SOCP solver specifically designed for embedded use \cite{domahidi2013ecos}, ii) qpOASES, an active set based strategy widely used for MPC \cite{Ferreau2014}, and iii) the MATLAB 2017b builtin \texttt{quadprog} using the \texttt{interiorpoint-convex} algorithm. At each timestep the SQP algorithm was initialized using the solution from the previous sampling instance.

We compare seven different configuration: (1) SSPC1, Algorithm~\ref{algo:SSPC} with $\kappa = 1$, (2) SSPC2, Algorithm~\ref{algo:SSPC} with $\kappa = 0.5$, (3) SSPC3, Algorithm~\ref{algo:SSPC} with $\kappa = 0.1$, (4) ECOSC, SQP with the QP solved by ECOS in condensed form\cite{andersson2013condensing}, (5) ECOSF, SQP with the QP solved by ECOS (6) QPRC, SQP with the QP solved by \texttt{quadprog} in condensed form\cite{andersson2013condensing}, (7) QPOC, SQP with the QP solved by qpOASES in condensed form\cite{andersson2013condensing}. All simulations were performed on a 2015 Macbook Pro with a 2.8GHz i7 processor and 16 GB of RAM running MATLAB 2017b. Execution times were measured using \texttt{tic} and \texttt{toc} and averaged over 10 executions to compensate for variance caused by the operating system. All solvers were stopped when $||F(x,p_k)|| \leq 10^{-5}$. 

\begin{rmk}
Note that the value of $\kappa$ used in SSPC1 was specifically chosen to yield $h = 1$, SSPC1 should therefore be interpreted as SSPC with the interpolation step removed.
\end{rmk}

Traces of the KKT residual $||F(x_k,p_k)||$ are shown in Figures~\ref{fig:case1} and \ref{fig:case2}. All solvers were able to keep the KKT residual within the specified tolerance except for ECOSF which has some minor difficulties during Case 2. Execution time histories for both cases are shown in Figure~\ref{fig:exe_comp}. Overall SSPC1 appeared to performed best followed by SSPC2, SSPC3, and ECOSF. The results of additional numerical trials are reported in Tables~\ref{tab:case1} and \ref{tab:case2}. SSPC outperformed all other methods on Case 1. In Case 2 SSPC1 and SSPC2 outperformed the other methods on average for $N = 10$ and $N=15$. At the longest horizon length considered, $N = 25$, ECOSF becomes the most effective method, demonstrating that ECOS scales more efficiently than SSPC, likely due to the sophisticated sparse linear algebra it employs\cite{domahidi2013ecos}. SSPC1, which uses $h = 1$, encountered numerical difficulties, caused by Jacobian ill-conditioning, for Case 2, $N = 25$. SSPC2 and SSPC3 did not encounter these difficulties, demonstrating the usefulness of the interpolation procedure for improving robustness while only mildly degrading performance.

Overall, SSPC performed well compared to several state of the art methods. Notably, despite being implemented in native MATLAB code, SSPC was competitive with SQP algorithms that use ECOS and qpOASES, both of which are implemented in \texttt{C/C++}. During our investigations, we found that both SQP and SSPC were perfectly reliable if only control constraints were considered. In this scenario the LICQ can be guaranteed to hold a-priori, provided the bounds are non-degenerate. We observed occasional robustness problems with SSPC when state constraints were allowed to activate. Specifically, if enough constraints activated at the same time then elements of $\partial_x F$ would become effectively singular despite regularization\footnote{In these cases SQP also encountered some difficulties but eventually recovered.}. We suspect this is due to the LICQ not holding, invalidating the strong regularity assumption used to guarantee invertability of the elements of $\partial_xF$. Future work will focus on relaxing the LICQ assumption and investigating techniques to constructively guarantee invertability of the iteration matrices in order to improve robustness.

\begin{figure}[htbp]
	\centering
	\includegraphics[width=0.48\textwidth]{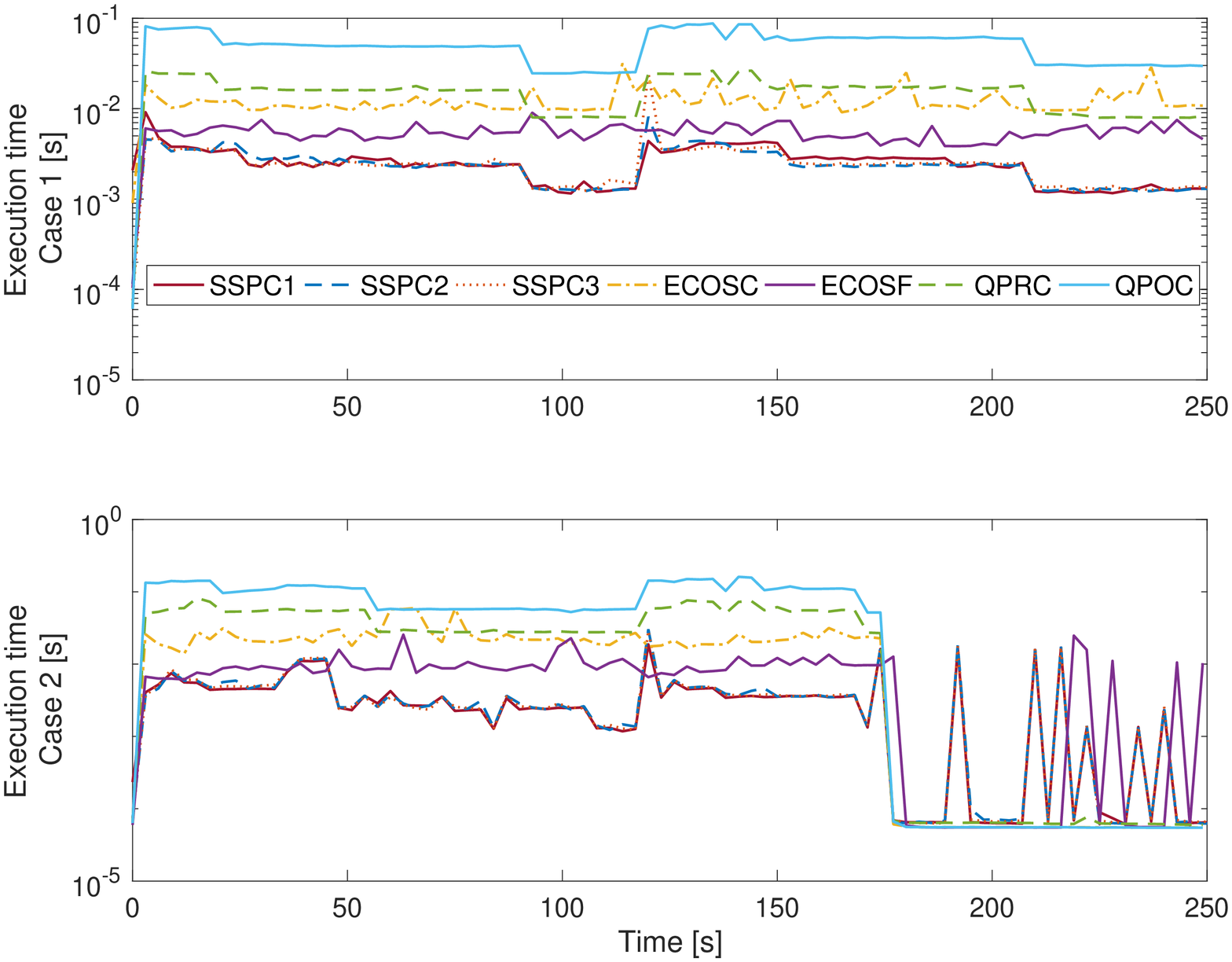}
	\caption{Execution time comparisons between the seven algorithms considered for case 1 (top) and case 2 (bottom).}
	\label{fig:exe_comp}
\end{figure}

\begin{table}
\centering
\caption{Numerical comparisons for case 1. All elements in a column are normalized by the first entry.}
\label{tab:case1}
\begin{tabular}{|c|c|c|c|c|c|c|}
\hline
 & \multicolumn{3}{|c|}{Max}&  \multicolumn{3}{|c|}{Average} \\ \hline
N & 10 & 15 & 25 & 10 & 15 & 25\\ \hline
Norm [ms] & 4.80 & 8.81 & 24.38 & 1.51 & 2.64 & 6.93 \\\hline
SSPC1& 1.35 & 0.67 & 0.74 & 1.04 & 0.96 & 1.07 \\\hline
SSPC2 & 1 & 1 & 1 & 1 & 1 & 1 \\\hline
SSPC3 & 2.89 & 3.38 & 2.90 & 1.03 & 1.21 & 1.13\\\hline
ECOSC & 3.36 & 4.67 & 2.56 & 4.26 & 4.97 & 4.58 \\\hline
ECOSF & 1.39 & 1.01 & 1.53 & 2.43 & 2.22 & 1.45 \\\hline
QPRC &  8.03 & 4.28 & 1.29 & 9.88 & 6.04 & 2.58 \\\hline
QPOC & 6.31 & 10.10 & 16.88 & 10.65 & 20.38 & 30.41 \\\hline
\end{tabular}
\end{table}

\begin{table}
\centering
\caption{Numerical comparisons for case 2. All elements in a column are normalized by the first entry.}
\label{tab:case2}
\begin{tabular}{|c|c|c|c|c|c|c|}
\hline
 & \multicolumn{3}{|c|}{Max}&  \multicolumn{3}{|c|}{Average} \\ \hline
N & 10 & 15 & 25 & 10 & 15 & 25\\ \hline
Norm [ms] & 9.75 & 33.35 & 67.29 & 1.78 & 4.22 & 15.85\\\hline
SSPC1& 0.94 & 0.66 & 5.77 & 1.00 & 0.97 & 15.58 \\\hline
SSPC3 & 1 & 1 & 1 & 1 & 1 & 1 \\\hline
SSPC5 & 2.0 & 0.93 & 1.39 & 1.05 & 0.98 & 1.02 \\\hline
ECOSC & 3.0 & 1.91 & 2.57 & 5.29 & 4.18 & 3.44 \\\hline
ECOSF & 1.25 & 0.91 & 0.71 & 2.30 & 1.88 & 0.87 \\\hline
QPRC & 4.18 & 2.47 & 3.82 & 8.80 & 7.92 & 6.54  \\\hline
QPOC &  5.71 & 4.97 & 10.28 & 12.78 & 16.67 & 18.11\\\hline
\end{tabular}
\end{table}



\section{Conclusions} \label{ss:conclusions}
In this paper we presented a semismooth predictor-corrector (SSPC) method for tracking solutions of parameterized constrained nonlinear programs. The method is simple, easy to code, has nice theoretical properties, and was shown to be competitive with an SQP algorithm which uses state of the art QP solver implementations.

Future work includes the following: Improving robustness of the method by relaxing the LICQ assumption, developing an efficient and robust adaptive algorithm for choosing the step sizes, investigating the use of SSPC for suboptimal MPC, and evaluation of the method on rapid prototyping hardware.


\appendix
\label{ss:convergence_proofs}
In this appendix we derive bounds on the tracking error of the predictor and corrector steps (Theorem~\ref{thrm:tracking_error}). The following proposition summarizes the properties of $F$ which will be used in the subsequent analysis.

\begin{prp} \label{prp:properties}
The mapping $F:\reals^{n+m+q} \times \mathcal{P} \to \reals^{n+m+q}$ has the following properties.
\begin{enumerate}
\item $F$ is locally Lipschitz continuous
\item $F$ is strongly semismooth
\item $F$ is CD regular \cite{qi1997semismooth} in the vicinity of any $(\bar{p},\bar{x})\in \gph{S}$, meaning that there exists a neighbourhood $X$ of $\bar{x}$ within which all $V \in \partial_x F(x,\bar{p})$ are nonsingular.
\end{enumerate}
\end{prp}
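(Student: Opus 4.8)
The plan is to establish the three properties in turn, the first two componentwise and quickly, and the third by a local nonsingularity argument that is the real content. For (1), the blocks $\nabla_z L(z,\lambda,v,p)$ and $g(z,p)$ of $F$ in \eqref{eq:Fmapping} are $\mathcal{C}^1$ in $(x,p)$ by the hypotheses on $f,g,c$, hence locally Lipschitz; each remaining component $\psi(-c_i(z,p),v_i)=\min(-c_i(z,p),v_i)$ is the composition of the globally Lipschitz map $\min(\cdot,\cdot)$ with a $\mathcal{C}^1$ map, hence locally Lipschitz; and a finite concatenation of locally Lipschitz maps is locally Lipschitz. For (2) the same decomposition works: $\min(\cdot,\cdot)$ is piecewise affine and therefore strongly semismooth, so each $\min(-c_i(z,p),v_i)$ is a composition of a strongly semismooth function with a smooth one; the smooth blocks are strongly semismooth under the smoothness hypotheses on $f,g,c$; and strong semismoothness is preserved under composition and concatenation (\cite{qi1993nonsmooth,qi1997semismooth}).

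For (3), fix $(\bar p,\bar x)\in\gph{S}$; by Assumption~\ref{ass:str_reg}, $\bar x$ satisfies the LICQ and the SSOSC. First I would prove that every $V\in\partial_x F(\bar x,\bar p)$ is nonsingular. Writing $V$ as in \eqref{eq:dF_x} with $H=\nabla_z^2 L$, $G=\nabla_z g$, $A=\nabla_z c$, $C=\mathrm{diag}(\gamma)$ and $D=\mathrm{diag}(1-\gamma)$, the KKT relations $c(\bar z,\bar p)\le0$, $\bar v\ge0$, $\bar v^{T}c(\bar z,\bar p)=0$ force $\gamma_i=1$ on the strictly active set $I_a^+$, $\gamma_i=0$ on the inactive set, and leave $\gamma_i\in[0,1]$ free only on the remaining (weakly active) indices, all of which still lie in $I_a$. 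Suppose now $Vw=0$ with $w=(u,\mu,\zeta)$. The bottom block gives $(Au)_i=0$ whenever $\gamma_i=1$, $\zeta_i=0$ whenever $\gamma_i=0$, and $\zeta_i=\tfrac{\gamma_i}{1-\gamma_i}(Au)_i$ whenever $\gamma_i\in(0,1)$. Premultiplying the top block by $u^T$ and using $Gu=0$ from the middle block yields $u^THu+\sum_i(Au)_i\zeta_i=0$, and the case split shows $\sum_i(Au)_i\zeta_i\ge0$, hence $u^THu\le0$. Since $Gu=0$ and $(Au)_i=0$ on $I_a^+$ — so also $\nabla_z f^T u=0$ by stationarity — the vector $u$ lies in $\mathcal{K}_+$, and the SSOSC forces $u=0$. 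With $u=0$, $\zeta$ is supported on $\{i:\gamma_i=1\}\subseteq I_a$ and the top block reduces to $G^T\mu+\sum_{i:\gamma_i=1}\zeta_i\nabla_z c_i^T=0$; the LICQ makes the gradients $\{\nabla_z g,\ \nabla_z c_i\ (i:\gamma_i=1)\}$ linearly independent, so $\mu=0$ and $\zeta=0$, i.e. $w=0$ and $V$ is nonsingular.

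Finally I would propagate nonsingularity to a neighborhood of $\bar x$. The set-valued map $x\mapsto\partial_x F(x,\bar p)$ is locally bounded (its smooth entries are continuous and the diagonal blocks $C,D$ have entries in $[0,1]$) and has closed graph; if nonsingularity failed on every neighborhood of $\bar x$ we could pick $x^k\to\bar x$, singular $V^k\in\partial_x F(x^k,\bar p)$ and unit vectors $w^k$ with $V^kw^k=0$, extract convergent subsequences $V^k\to\bar V\in\partial_x F(\bar x,\bar p)$ and $w^k\to\bar w$ with $\|\bar w\|=1$, and obtain $\bar V\bar w=0$, contradicting the previous step. Hence there is a neighborhood $X$ of $\bar x$ on which every $V\in\partial_x F(x,\bar p)$ is nonsingular, which is (3). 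I expect the main obstacle to be the nonsingularity argument at $\bar x$ itself: correctly reading the admissible structure of $C$ and $D$ at a KKT point — in particular the weakly active indices, where $\gamma_i$ is a genuine free parameter — and lining up the resulting conditions with the paper's critical cone $\mathcal{K}_+$ so that the SSOSC and the LICQ apply cleanly, rather than any single estimate being difficult.
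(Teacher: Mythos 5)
Your proof is correct, but where the paper disposes of the three claims almost entirely by citation, you have written out the underlying arguments. The paper proves part 1 by composition of Lipschitz maps exactly as you do, cites \cite[Theorem 3.2]{qi1997semismooth} for part 2, and for part 3 cites \cite[Theorem 4.2]{qi1997semismooth} (strong regularity implies CD regularity) together with \cite[Proposition 3.1]{qi1993nonsmooth} to pass from nonsingularity of $\partial_x F(\bar{x},\bar{p})$ to nonsingularity on a neighbourhood. Your part 3 --- the case analysis on the admissible $\gamma_i$ at a KKT point, the inequality $\sum_i (Au)_i\zeta_i\ge 0$, the identification of $u$ as a critical direction (including the step $\nabla_z f(\bar{z},\bar{p})^T u=0$ via stationarity, which is precisely what reconciles the paper's definition of $\mathcal{K}_+$ with the usual SSOSC cone), and the LICQ argument eliminating $(\mu,\zeta)$ --- is essentially the proof of the cited Qi--Jiang result specialized to the $\min$-based reformulation, and your closed-graph/compactness argument for propagating nonsingularity is exactly the content of Qi's Proposition 3.1. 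So the mathematical route is the same; unpacking the references buys transparency about where LICQ and SSOSC actually enter, at the cost of length. One caveat, shared with the paper: strong semismoothness of the smooth blocks requires local Lipschitz continuity of $\nabla_z^2 L$ and of the constraint Jacobians, which the stated $\mathcal{C}^2$-in-$z$, $\mathcal{C}^1$-in-$p$ hypotheses do not quite supply; your phrase ``under the smoothness hypotheses on $f,g,c$'' glosses over this in the same way the paper's citation does, and a fully rigorous version would either strengthen the regularity assumptions or settle for plain semismoothness in part 2.
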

\begin{proof}
\textit{Result 1}: This follows from the continuous differentiability of all the functions in \eqref{eq:NLP} and the Lipschitz continuity of the \textrm{min} function.
\textit{Result 2}: \cite[Theorem 3.2]{qi1997semismooth}.
\textit{Result 3}: CD regularity is implied by strong regularity \cite[Theorem 4.2]{qi1997semismooth}. The remaining claims follow from the CD regularity of $F$ and \cite[Proposition 3.1]{qi1993nonsmooth}.
\end{proof}

\subsection{Proof of Theorem~\ref{thrm:tracking_error}} \label{ss:tracking_proof}
Consider a point $x_{k-1} \in X_{k-1}$, where $X_{k-1}$ is a neighbourhood of $x^*_{k-1} \in S(p_{k-1})$ within which all $B_{k-1}\in \partial_x F(x,p_{k-1})$ are nonsingular. This neighbourhood must exist by Proposition~\ref{prp:properties}. Now consider the predictor equation,
\begin{equation*}
	x_k^- = x_{k-1} - \hat{B}_{k-1}^{-1} [V_{k-1} \Delta p_k + F_{k-1}].
\end{equation*}
Performing some algebraic manipulations we obtain
\begin{gather*}
	-\hat{B}_{k-1} e_k^{-} = [\hat{B}_{k-1} (x_k^* - x_{k-1}) + V_{k-1} \Delta p_k + F_{k-1}]\\
	 =[B_{k-1} (x_k^* - x_{k-1}) + V_{k-1} \Delta p_k + F_{k-1} + \Sigma_{k-1} (x_k^* - x_{k-1})].
\end{gather*}
Due to the strong semismoothness of $F$ (Proposition~\ref{prp:properties}) we have that there exits a neighbourhood $Y_{k-1}$ of $(x_{k-1},p_{k-1})$ such that
\begin{equation} \label{eq:ss_predictor_lin}
	F(x,p) = F_{k-1} + \begin{bmatrix}
		B_{k-1} & V_{k-1}
	\end{bmatrix} \begin{bmatrix}
		x - x_{k-1}\\ p-p_{k-1}
	\end{bmatrix} + r,
\end{equation}
wherein the residual satisfies $||r|| \leq \gamma(||x - x_{k-1}||^2 + ||p - p_{k-1}||^2)$, $\forall (x,p) \in Y_{k-1}$. Applying \eqref{eq:ss_predictor_lin} with $x = x_k^*$ and $p = p_k$ yields
\begin{equation*}
	-e_k^- = B_{k-1}^{-1} [r + \Sigma_{k-1} (x_k^* - x_{k-1})],
\end{equation*}
where we have also used that $F(x_k^*,p_k) = 0$. Taking norms we obtain that, if $(x_k^*,p_k) \in Y_{k-1}$, then
\begin{multline}
	\frac{||e_k^-||}{||\hat{B}_{k-1}||} \leq \gamma ||x^*_k - x_{k-1}||^2 + \gamma ||\Delta p_k||^2 \\+ ||\Sigma_{k-1}||~||x^*_k - x_{k-1}||. \label{eq:Cproof}
\end{multline}
To proceed, we consider the term
\begin{subequations}
\begin{align}
	||x^*_k - x_{k-1}|| &\leq ||x_k^* - x_{k-1}^*|| + ||x_{k-1}^* - x_{k-1}||\\
	&\leq L_p ||\Delta p_k|| + ||e_{k-1}||, \label{eq:Vproof}
\end{align}
\end{subequations}
in \eqref{eq:Vproof} we have used Corollary~\ref{corr:lipschitz_param} to conclude that $x^*(p)$ is Lipschitz continuous on a set $T_{k-1}$, containing $p_{k-1}$, with constant $L_p$. Define the set $U_{k-1} = \bar{X}_{k-1} \times \bar{T}_{k-1}$ such that $U_{k-1} \subseteq Y_{k-1}$, $\bar{X}_{k-1} \subseteq X_{k-1}$, and $\bar{T}_{k-1} \subseteq T_{k-1}$. This is always possible due to \eqref{eq:Vproof}. Applying \eqref{eq:Vproof} to \eqref{eq:Cproof} we obtain that
\begin{multline*}
	\frac{||e_k^-||}{||\hat{B}_{k-1}||} \leq \gamma (L_p ||\Delta p_k|| + ||e_{k-1}||)^2 + \gamma ||\Delta p_k||^2 \\+ ||\Sigma_{k-1}||~(L_p ||\Delta p_k|| + ||e_{k-1}||).
\end{multline*}
The error induced by the Jacobian inexactness $\Sigma_{k-1}$ can be bounded due to the fact that, by construction (Step~\ref{step:reg_update} in Algorithm~\ref{algo:SSPC}), $||E_{k-1}|| \leq c||F_{k-1}||$ for some $c > 0$. Thus we have
\begin{align} \label{eq:E_bound}
	||E_{k-1}||\leq c||F_{k-1}|| \leq c L_F||e_{k-1}||,
\end{align}
for all $(x,p) \in U_{k-1}$ where we have used the Lipschitz continuity of $F$, with constant $L_F$, on the set $U_{k-1}$. Applying this result allows us to conclude that
\begin{multline*}
	\frac{||e_k^-||}{||\hat{B}^{-1}_{k-1}||} \leq \gamma (L_p ||\Delta p_k|| + ||e_{k-1}||)^2 + \gamma ||\Delta p_k||^2 \\+ c L_F||e_{k-1}|| (L_p ||\Delta p_k|| + ||e_{k-1}||),
\end{multline*}
provided $p_k \in \bar{T}_{k-1}$ and $x_{k-1} \in \bar{X}_{k-1}$.
Expanding and collecting terms we obtain that
\begin{equation*}
	||e_{k}^-|| \leq \alpha ||\Delta p_k||^2 + \beta ||e_{k-1}||~||\Delta p_k|| + \sigma ||e_{k-1}||^2,
\end{equation*}
where $\alpha = \gamma (1+L_p^2) ||\hat{B}_{k-1}^{-1}||$, $\beta = L_p (2+cL_F)||\hat{B}_{k-1}^{-1}||$, and $\sigma_ = (1+cL_F)||\hat{B}_{k-1}^{-1}||$.\\

\noindent Now consider the corrector equation
\begin{equation*}
	x_k = x_k^- - (\hat{E}_{k})^{-1} F_{k-1}^-,
\end{equation*}
performing some algebraic manipulation and exploiting the strong semimsoothness of $F$ we obtain that
\begin{gather*}
e_k = - (\hat{E}_{k})^{-1}[E_k(x^*_k - x^-) + F_k^- - \Sigma_ke_k^-],\\
e_k = - (\hat{E}_{k})^{-1}[r_k^- - \Sigma_ke_k^-],
\end{gather*}
where $||r_k^-|| \leq \zeta ||e_k^-||^2$ holds in a neighbourhood $Z_k$ of $x^*_k$. Taking norms and using \eqref{eq:E_bound} to bound the inexactness in the Jacobian we have that
\begin{equation*}
	||e_k|| \leq  \eta_k||e_k^-||^2, ~~\forall x_{k}^- \in Z_k
\end{equation*}
where $\eta = ||(\hat{E}_{k})^{-1}|| (\zeta+c L_F^-)$. \sqr

\bibliography{ss_pc}



\end{document}